\newtheorem{thm}{Theorem}
\newtheorem{cor}{Corollary}
\newtheorem{lem}{Lemma}
\numberwithin{equation}{section}
\journal{Advances in Applied Mathematics}
\begin{document}
\renewcommand{\qedsymbol}{}
\allowdisplaybreaks
\begin{frontmatter}

\title{Bijective enumeration of general stacks}

\author[mymainaddress]{Qianghui Guo\corref{mycorrespondingauthor}}
\cortext[mycorrespondingauthor]{Corresponding author}
\ead{guo@nankai.edu.cn}

\author[mymainaddress]{Yinglie Jin\corref{mycorrespondingauthor}}
\ead{yljin@nankai.edu.cn}

\author[mysecondaryaddress]{Lisa H. Sun\corref{mycorrespondingauthor}}
\ead{sunhui@nankai.edu.cn}

\author[mymainaddress]{Shina Xu}

\address[mymainaddress]{School of Mathematical Sciences, LPMC, Nankai University, Tianjin 300071, PR China.}
\address[mysecondaryaddress]{Center for Combinatorics, LPMC, Nankai University, Tianjin 300071, PR China.}

\begin{abstract}

Combinatorial enumeration of various RNA secondary structures and protein contact maps, is of great interest for both combinatorists and computational biologists. 
Enumeration of protein contact maps has considerable difficulties due to the significant higher vertex degree than that of RNA secondary structures. 
The state of art maximum vertex degree in previous works is two. 
This paper proposes a solution for counting stacks in protein contact maps with arbitrary vertex degree upper bound. 
By establishing bijection between such general stacks and $m$-regular $\Lambda$-avoiding $DLU$ paths, and counting the paths using theories of pattern avoiding lattice paths, we obtain a unified system of equations for generating functions of general stacks. 
We also show that previous enumeration results for RNA secondary structures and protein contact maps can be derived from the unified equation system as special cases.

 
\end{abstract}

\begin{keyword}
combinatorial enumeration \sep stack \sep pattern avoiding lattice path \sep RNA secondary structure \sep protein contact map

\MSC[2010] 05A15 \sep 05A19
\end{keyword}

\end{frontmatter}


\section{Introduction}
The diagram  $G([n], E)$ refering to a graph $G([n], E)$ represented by drawing $n$ vertices in a horizontal line and arcs $(i, j) \in E$ in the upper-half plane, is a classical combinatorial structure closely related to set partitions and lattice paths \citet{Chen2005, chen-linked-2008, stein-class-1978I}. It attracts extensive studies by various motivations, one of which is from computational molecular biology, where the diagram is often used to model biopolymer structures like RNA secondary structures and protein contact maps. 

Recall that RNA secondary structure is a sequence of nucleotides or bases which can form base pairs under the Watson-Crick pairing rule, an RNA secondary structure 
can be represented by a diagram, where any arc is a base pair. Since Waterman \cite{waterman1978secondary, WS78} set up a combinatorial framework for the study of RNA secondary structures in the 1970s, which has attracted significant interest from both combinatorialists and theoretical biologists \cite{howell1980computation, schmitt1994linear, stein1979rna, waterman-combinatorics-1979}.

Inspired by the enumeration of RNA secondary structure, the research of protein got more attention. Protein can be seen as a sequence of amino acid residues of twenty types. The function of a protein is directly determined by its three-dimensional structure, which researched by lattice model in general. In such model, protein fold are represented by self-avoiding walks on the particular lattice, see Istrail and Lam \cite{istrail-combinatorial-2009}.  Two residues are in a contact if they reside on two adjacent points in the lattice, but not consecutive in the sequence. All contacts of a protein fold form the contact map. The contact map for protein folding usually represented as the diagram.

Goldman et al. \cite{GIP99} showed that any protein contact map in the 2D square lattice can be decomposed into (at most) two stacks and one queue. Agarwal  et al. \cite{Aga} found that any protein contact map in the 3D cube lattice can be decomposed into (at most) $O(\sqrt{n})$ stacks or queues.
When folding a protein in the lattice model, different lattice models will introduce different degree and arc length constraints to the corresponding contact map. For instance, on the 2D square lattice, each internal vertex in the contact map has maximum degree 2, while the two terminal vertices can have maximum degree 3, and the arc length is at least 3. On the 3D cube lattice, the degree of each internal vertex and terminal vertex is at most 4 and 5, respectively, while the length of each arc is at least 3, see Figure \ref{fig:map}.

\begin{figure}[H]
\begin{minipage}[t]{0.3\textwidth}
\centering
 \includegraphics[scale=0.4]{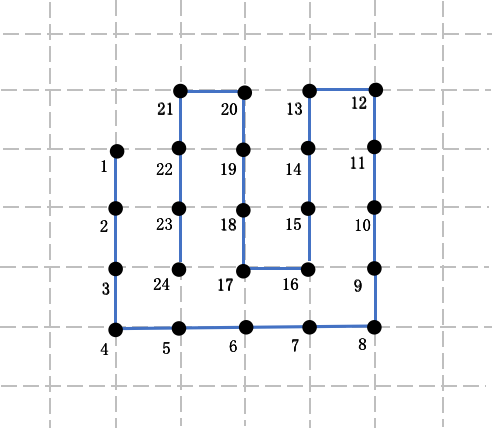}
\label{fig:side:a}
\end{minipage}%
\begin{minipage}[t]{0.4\textwidth}
\centering
\begin{tikzpicture}
			\foreach \s in {0.32} 
			{\foreach \x in {1, ..., 24}
				\filldraw (\x *\s, 0) circle[radius=1pt];
				\foreach \x in {1, ..., 24}
				\draw (\x*\s, -0.2) node{\tiny \x};
								
				\draw[thick, violet]
				(1*\s, 0) to [bend left=50] (22*\s, 0)
				(2*\s, 0) to [bend left=50] (23*\s, 0)
				(3*\s, 0) to [bend left=50] (24*\s, 0)
				(5*\s, 0) to [bend left=45] (24*\s, 0);
				\draw[thick, red, densely dash dot] 
				(6*\s, 0) to [bend left=50] (17*\s, 0)
				(7*\s, 0) to [bend left=50] (16*\s, 0)
				(9*\s, 0) to [bend left=50] (16*\s, 0)
				(10*\s, 0) to [bend left=50] (15*\s, 0)
				(14*\s, 0) to [bend left=50] (14*\s, 0)
				(17*\s, 0) to [bend left=50] (24*\s, 0)
				(18*\s, 0) to [bend left=50] (23*\s, 0)
				(19*\s, 0) to [bend left=50] (22*\s, 0);
				
				\draw[thick, blue, dash pattern=on 1pt off 1pt]
				(13*\s, 0) to [bend left=50] (20*\s, 0)
				(14*\s, 0) to [bend left=50] (19*\s, 0)
				(18*\s, 0) to [bend left=50] (18*\s, 0);
				}
	\end{tikzpicture}
\label{fig:side:b}
\end{minipage}
\begin{minipage}[t]{0.3\textwidth}
\centering
 \includegraphics[scale=0.5]{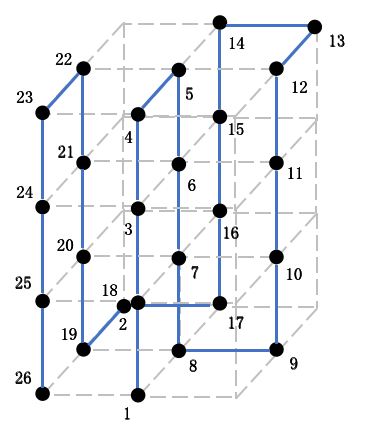}
\end{minipage}%
\begin{minipage}[t]{0.4\textwidth}
\centering
\begin{tikzpicture}
			\foreach \s in {0.32} 
			{\foreach \x in {1, ..., 26}
				\filldraw (\x *\s, 0) circle[radius=1pt];
				\foreach \x in {1, ..., 26}
				\draw (\x*\s, -0.2) node{\tiny \x};
								
				\draw[thick,brown]
				(1*\s, 0) to [bend left=50] (26*\s, 0)
				(2*\s, 0) to [bend left=50] (25*\s, 0)
				(3*\s, 0) to [bend left=50] (24*\s, 0)
				(4*\s, 0) to [bend left=50] (23*\s, 0)
				(5*\s, 0) to [bend left=50] (22*\s, 0)
				
				(7*\s, 0) to [bend left=50] (20*\s, 0)
				(8*\s, 0) to [bend left=50] (19*\s, 0);
				\draw[thick, violet, dash pattern=on 4pt off 2pt]
				(1*\s, 0) to [bend left=50] (8*\s, 0)
				(2*\s, 0) to [bend left=50] (7*\s, 0)
				(19*\s, 0) to [bend left=50] (26*\s, 0)
				(20*\s, 0) to [bend left=50] (25*\s, 0)
				;
				\draw[thick, blue,  dash pattern=on 1pt off 1pt] 
				(6*\s, 0) to [bend left=50] (21*\s, 0)
				(6*\s, 0) to [bend left=50] (11*\s, 0)
				(6*\s, 0) to [bend left=50] (15*\s, 0)
				(3*\s, 0) to [bend left=50] (6*\s, 0)
				(7*\s, 0) to [bend left=50] (10*\s, 0)
				(21*\s, 0) to [bend left=50] (24*\s, 0)
				;
				\draw[thick, red,densely dash dot]
				(5*\s, 0) to [bend left=50] (14*\s, 0)
				(5*\s, 0) to [bend left=50] (12*\s, 0)
				(7*\s, 0) to [bend left=50] (16*\s, 0)
				(8*\s, 0) to [bend left=50] (17*\s, 0);
				\draw[thick, yellow, dash pattern=on 6pt off 2pt]

				;

				}
	\end{tikzpicture}
\end{minipage}
\caption{Protein folds on 2D square and 3D cube lattices and their contact maps}
\label{fig:map}
\end{figure}

Istrail and Lam \cite{istrail-combinatorial-2009} proposed the question concerning generalizations of the Schmitt–Waterman counting formulas for RNA secondary structures \cite{schmitt1994linear} to enumerating protein stacks and queues.
In the context of combinatorics, a stack is a noncrossing diagram, and a queue is a nonnesting diagram\cite{Chen2014}. Moreover, a stack(queue) known as simple if the degree of each vertex bounded by one, and linear if the degree of each vertex bounded by two. A stack is said to be $m$-regular if the length of each arc is at least $m$. Clearly, RNA secondary structures can be viewed as $2$-regular simple stacks.

To explore the question, Waterman \cite{waterman1978secondary}, Howell et al. \cite{howell1980computation} and  Hofacker et al.\cite{hofacker1998combinatorics}, derived the recurrence relation of $m$-regular simple stack. Afterwards, 
Stein et al.\cite{stein1979rna} got the generating function of $m$-regular simple stack by its recurrence relation.Müller and Nebel \cite{Ne2015} presented the definition of extended RNA secondary structure ( $2$-regular linear stack), and obtained relevant quantitative analysis result. Chen et al. \cite{Chen2014} study the combinatorial enumeration of $m$-regular linear stack via the medium of zigzag stack and primary decomposition method, and got enumeration results in the form of generating function equation, recurrence relation and asymptotic formula. 
Furthermore, enumeration results, mainly in the form of generating functions, were obtained for extended $m$-regular linear stacks \cite{GS16}(the degree of terminal vertices can reach $3$); $m$-regular linear stacks with $n$ vertices and $k$ arcs \cite{guo2018combinatorics}; $2$-regular and $3$-regular simple queues \cite{GSW17}. 
 Guo et al. \cite{guo2023bijective} obtained explicit enumeration formulas for the saturated extended $2$-regular simple stacks (the degree of terminal vertices can reach $2$) by constructing a semi-bijective algorithm between such structures and the forests of small trees.

For breaking the restriction of degree to research the enumeration of stacks with larger degree of each vertex, we concerned with the stacks with the degree of each vertex bounded by $d$, defined as 
$d$-contact stacks. A stack is general if it is a $m$-regular $d$-contact stack with arbitrary $m,d\in\mathbb{Z}^+$. Denoting $S_{m,d}(x)$ as the generating function of $m$-regular $d$-contact stack. As illustrated in Figure \ref{fig:generalstack} is a $3$-regular $4$-contact stack. 
 \begin{figure}[H]
\begin{center}
\setlength{\unitlength}{0.4mm}
\begin{tikzpicture}
					\foreach \s in {1}{
						{\foreach \x in {1,...,8}
							\filldraw (\x *\s,0) circle[radius=.8pt];
							\foreach \x in {1,...,8}
							\draw (\x*\s,-0.2) node{\tiny \x};
							\foreach \start/\end in {1/3,1/8,3/5,3/8,5/8,6/8}
							\draw (\start*\s,0) to [bend left=60] (\end*\s,0);}}
				\end{tikzpicture}
  \caption[discard]{A $3$-regular $4$-contact stack} 
  \label{fig:generalstack}
\end{center}
\end{figure}
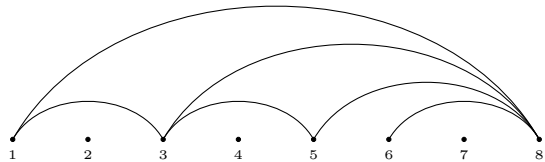

In order to enumerate the general stack, we established a bijection between $m$-regular $d$-contact stack and a specific Motzkin path with certain patterns avoiding, said as $m$-regular $\Lambda$-avoiding $DLU$ paths of length $dn$ from $(0, 0)$ to $(dn, 0)$. By decomposing the $m$-regular $\Lambda$-avoiding $DLU$ paths, we derive the following theorem to obtain $S_{m, d}(x)$ through $G^{\langle 0, 0\rangle }_{m, d}(x)$, where $G^{\langle s, t\rangle }_{m, d}(x) $ represented the generating function of  $m$-regular $\Lambda$-avoiding $DLU$ paths of length $dn$ from $(0, s)$ to $(dn, t)$.

\begin{thm}
\label{thm:syseq}
The generating function of $m$-regular $d$-contact stacks satisfying following eqation, 
 \begin{equation}
\label{equ:samenum}
S_{m, d}(x^d)=G^{\langle 0, 0\rangle }_{m, d}(x), 
\end{equation}
while $G^{\langle 0, 0\rangle }_{m, d}(x)$ can be determined by the following system of equations, 
\label{thm:equations}
\begin{numcases}{}
  G^{\langle 0, 0\rangle }_{m, d}=1+x^d\sum\limits_{i=0}^{d}G^{\langle i, 0\rangle }_{m, d}, \nonumber\\
  G^{\langle s, t\rangle }_{m, d}=(-1)^t\delta_{s, t}C_{m, d}+\sum\limits_{i=0}^{t-1}(-1)^{t-i+1}G^{\langle s-t+i, i\rangle }_{m, d}\label{equ:syseq} \\
  ~~~~~~~~+x^d\sum\limits_{a=1}^{d}\bigg((-1)^a\delta_{s, a}C_{m, d}+\sum\limits_{j=0}^{v-1}(-1)^{v-j+1}G^{\langle u-v+j, j\rangle }_{m, d}\bigg)\sum\limits_{c=0}^{d-a}G^{\langle c, t\rangle }_{m, d}, \nonumber\\
   ~~~~~~~~~~~~~~~~~~~~~~~~~~~~~~~~~~~~~~~~~~~~~~~~0\leq t \leq d-1, ~0<s\leq d, ~t<s, \nonumber
 \end{numcases}
where $G, C$ are short for $G(x), C(x)$, $u=\max(s, a), ~v=\min(s, a)$, $\delta_{i, j}=1~(i=j)$ or $0~(i\neq j)$, and $C_{m, d}(x)=\frac{x^{d(m-1)}-1}{x^d-1}$.
\end{thm}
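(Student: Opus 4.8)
The statement splits into two parts, the transfer identity \eqref{equ:samenum} and the recursive system \eqref{equ:syseq}, and I would handle them in turn.

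Identity \eqref{equ:samenum} is a direct corollary of the bijection established above: it expands each of the $n$ vertices of an $m$-regular $d$-contact stack into a block of $d$ consecutive steps, sends each arc to a matched up--down pair under the noncrossing (last-in--first-out) pairing, and fills the unused degree at each vertex with level steps, producing a bijection onto the $m$-regular $\Lambda$-avoiding $DLU$ paths of length $dn$ from $(0,0)$ to $(dn,0)$ under which the number of vertices becomes the number of blocks. Hence $[x^{dn}]S_{m,d}(x^d)=[x^{dn}]G^{\langle 0,0\rangle}_{m,d}(x)$ for all $n\ge 0$, and since only powers of $x^d$ occur on either side this is exactly \eqref{equ:samenum}. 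It then remains to determine $G^{\langle 0,0\rangle}_{m,d}$, and the plan is to set up recurrences for all the $G^{\langle s,t\rangle}_{m,d}$ by peeling off the first block of $d$ steps.

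For the first equation of \eqref{equ:syseq}: a path counted by $G^{\langle 0,0\rangle}_{m,d}$ is either empty, contributing the $1$, or begins with a block of $d$ steps, contributing the factor $x^d$. Because the path starts at height $0$ and never dips below the axis, this first block closes no arc, so it consists of $i$ up steps and $d-i$ level steps for some $0\le i\le d$ and finishes at height $i$; deleting it leaves an $m$-regular $\Lambda$-avoiding $DLU$ path from height $i$ to height $0$ on the remaining $d(n-1)$ steps, i.e.\ one enumerated by $G^{\langle i,0\rangle}_{m,d}$, and the step is reversible. Summing over $i$ gives $G^{\langle 0,0\rangle}_{m,d}=1+x^d\sum_{i=0}^{d}G^{\langle i,0\rangle}_{m,d}$.

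The second equation is the heart of the matter. Fixing $s,t$ in the stated range and a path $P$ from $(0,s)$ to $(dn,t)$ of the required type, I would again write $P=BP'$ with $B$ the first block of $d$ steps: in the prescribed within-block order $B$ makes $a$ new openings (up steps) for some $0\le a\le d$, some closings of arcs already open at height $s$, and level steps, and it ends at a height $c$ with $0\le c\le d-a$; the tail $P'$ runs from height $c$ to height $t$ and is counted by $G^{\langle c,t\rangle}_{m,d}$, which yields the bilinear part $x^d\sum_{a=1}^{d}(\cdots)\sum_{c=0}^{d-a}G^{\langle c,t\rangle}_{m,d}$ and, in the degenerate case of a trivial $B$, the linear part. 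Two constraints then have to be pushed through the splitting: $m$-regularity forbids any arc opened inside $B$ from closing within the next $m-1$ blocks, and, via $x^dC_{m,d}(x)=x^d+x^{2d}+\cdots+x^{d(m-1)}$, this exclusion on the tail is what produces the terms carrying $C_{m,d}(x)=\sum_{k=0}^{m-2}x^{dk}$ (the Kronecker $\delta$'s isolating the cases $s=a$ and $s=t$ where $B$ exactly realises the net descent); and the $\Lambda$-avoiding condition can be broken at the seam between $B$ and $P'$, so it must be restored by inclusion--exclusion --- subtract the paths creating one forbidden $\Lambda$ there, add back those creating two, and so on --- which, together with the reindexing $u=\max(s,a)$, $v=\min(s,a)$ (according to whether $B$ opens more arcs than it closes), is exactly what generates the finite alternating sums $\sum_{i=0}^{t-1}(-1)^{t-i+1}G^{\langle s-t+i,i\rangle}_{m,d}$ and $\sum_{j=0}^{v-1}(-1)^{v-j+1}G^{\langle u-v+j,j\rangle}_{m,d}$. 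The hard part is precisely to carry out this inclusion--exclusion cleanly and to verify that the corrections collapse to exactly the displayed alternating sums rather than to something longer; once that is done, checking the index ranges and the degenerate cases, and noting that every $G$ on a right-hand side either carries a factor $x^d$ or has strictly smaller second index --- so that \eqref{equ:syseq} is triangular up to an $x^d$-contraction and determines $G^{\langle 0,0\rangle}_{m,d}$ uniquely as a formal power series --- is routine.
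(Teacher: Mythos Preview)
Your treatment of \eqref{equ:samenum} and of the first line of \eqref{equ:syseq} is fine and matches the paper: the bijection gives the transfer identity, and peeling off the first block of a path starting at height $0$ (which therefore contains no $D$-steps) yields $G^{\langle 0,0\rangle}_{m,d}=1+x^d\sum_{i=0}^{d}G^{\langle i,0\rangle}_{m,d}$.

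The gap is in the second equation. Peeling off the first block of a path in $\mathcal{G}^{\langle s,t\rangle}_{m,d}$ gives, after correcting for the boundary constraints, a \emph{linear} recursion of the shape $G^{\langle s,t\rangle}_{m,d}=x^d\sum(\text{coefficients})\,G^{\langle c,t\rangle}_{m,d}$; it cannot by itself produce the \emph{bilinear} term $x^d\sum_{a}(\cdots)\sum_{c}G^{\langle c,t\rangle}_{m,d}$ in \eqref{equ:syseq}, because the factor $(\cdots)$ there is itself a full generating function (namely $A^{\langle u,v\rangle}_{m,d}$, the series for \emph{prime} $DLU$ paths from height $u$ to height $v$ that never touch the $x$-axis). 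What the paper does instead is a first-return decomposition: for $M\in\mathcal{G}^{\langle s,t\rangle}_{m,d}$ one locates the first \emph{piece} $P_k=D^{a}L^{b}U^{c}$ that reaches the $x$-axis and writes $M=M_1P_kM_2$ with $M_1\in\mathcal{A}^{\langle s,a\rangle}_{m,d}$ (or $\mathcal{A}^{\langle a,s\rangle}_{m,d}$ by symmetry) and $M_2\in\mathcal{G}^{\langle c,t\rangle}_{m,d}$; the case where $M$ never touches the axis supplies the ``linear'' term $A^{\langle s,t\rangle}_{m,d}$.

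The alternating sums are then not an inclusion--exclusion on $\Lambda$'s created at a seam, but the solution of the height-shift recurrence for prime paths (the paper's Lemma~\ref{lem: A}): translating a prime path down by one unit, $\Lambda$-avoidance forces the translate to actually touch the axis, so $A^{\langle s,t\rangle}_{m,d}=G^{\langle s-1,t-1\rangle}_{m,d}-A^{\langle s-1,t-1\rangle}_{m,d}$, and iterating gives $A^{\langle s,t\rangle}_{m,d}=(-1)^{t}\delta_{s,t}C_{m,d}+\sum_{i=0}^{t-1}(-1)^{t-i+1}G^{\langle s-t+i,i\rangle}_{m,d}$. The $C_{m,d}$ does not encode ``arcs opened in $B$ closing too soon in the tail''; it is the base case $A^{\langle 1,1\rangle}_{m,d}=G^{\langle 0,0\rangle}_{m,d}-C_{m,d}$, since after the last shift the $m$-regularity of $UPD$ excludes exactly the $m-1$ horizontal paths $\emptyset,L^d,\ldots,L^{(m-2)d}$. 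Substituting this closed form for $A^{\langle s,t\rangle}_{m,d}$ and for $A^{\langle u,v\rangle}_{m,d}$ into the first-return identity is what produces \eqref{equ:syseq}. Your proposed mechanism would not generate the index pattern $\langle s-t+i,i\rangle$ (simultaneous shift of both heights) nor place the Kronecker deltas at $s=t$ and $s=a$, so as written it would not arrive at the stated system.
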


This paper is organized as follows. In Section 2, we describe the bijection between general stacks and $DLU$ paths. In Section 3, we decompose the $DLU$ paths to get the system of equations \eqref{equ:syseq}.  In Section 4, we apply  Theorem \ref{thm:syseq} to verify the generating function of $m$-regular $1$-contact (simple) stack and $2$-contact (linear) stack. In addition, we derive the equation that the generating function of $m$-regular $3$-contact stack satisfying.

\section{The Bijection between general stacks and $DLU$ paths}

Recall that a Motzkin path of length $n$ is a lattice path in $\mathbb{Z}^{2}$ from $(0, 0)$ to $(n, 0)$ consisting of up-steps $(1, 1)$, down-steps $(1, -1)$ and horizontal steps $(1, 0)$ which never passes below $x$-axis. For convenience of description, we denote up-steps as $U$, down-steps as $D$, horizontal steps  as $L$. 

At the moment, we present $DLU$ path, which is the specific Motzkin path concerned in this paper. A $DLU$ path $M=P_1P_2\ldots P_n$ of length $dn$ means that for any $1\leq i\leq n$, the length of subpath $P_i$ is $d$, and $$P_i \in\{D^aL^bU^c| a+b+c=d, ~a, b, c\in\mathbb{N}\}, $$ and $M$ never passes below $x$-axis. Call $P_i$ a piece of path $M$. The height of a vertex $(x, y)$ is defined to be $y$-coordinate, and the height of a path is the minimal height of its vertices. A $DLU$ path $M$ of length $dn$ starting and ending at $x$-axis  is $m$-regular if for any subpath $UPD$ of $M$ such that
$$ P \notin \{\emptyset, L^d, L^{2d}, \ldots, L^{(m-2)d}\}.$$ 
A $DLU$ path $M=P_1P_2 P_3\ldots $  starting and ending at $x$-axis has pattern $\Lambda$ if there exists a subpath $UUPDD$ of $M$ with $P$ starting and ending at the same height $h$ and never passing below the height $h$, where $UU\subset P_i, DD\subset P_j$. 
Denoting the set of $m$-regular $\Lambda$-avoiding $DLU$ paths of length $dn$ from $(0, 0)$ to $(dn, 0)$ as $\mathcal{G}^{\langle 0, 0\rangle }_{m, d}(dn)$.
\begin{figure}[H]
\begin{center}
\setlength{\unitlength}{0.36mm}
\begin{picture}(305, 62)

\put(0, 20){\circle*{1.5}}
\put(10, 30){\circle*{1.5}}
\put(20, 40){\circle*{1.5}}
\put(30, 50){\circle*{1.5}}
\put(40, 40){\circle*{1.5}}
\put(50, 50){\circle*{1.5}}
\put(60, 60){\circle*{1.5}}
\put(70, 50){\circle*{1.5}}
\put(80, 40){\circle*{1.5}}
\put(90, 40){\circle*{1.5}}
\put(100, 30){\circle*{1.5}}
\put(110, 20){\circle*{1.5}}
\put(120, 20){\circle*{1.5}}
\put(0, 20){\line(1, 1){30}}
\put(30, 50){\line(1, -1){10}}
\put(40, 40){\line(1, 1){20}}
\put(60, 60){\line(1, -1){20}}
\put(80, 40){\line(1, 0){10}}
\put(90, 40){\line(1, -1){20}}
\put(110, 20){\line(1, 0){10}}
\multiput(15, 40)(1, 0){80}{\line(0, 1){0.5}}
\multiput(55, 60)(1, 0){10}{\line(0, 1){0.5}}
\put(49, 0){(a)}

\put(150, 20){\circle*{1.5}}
\put(160, 20){\circle*{1.5}}
\put(170, 30){\circle*{1.5}}
\put(180, 40){\circle*{1.5}}
\put(190, 40){\circle*{1.5}}
\put(200, 40){\circle*{1.5}}
\put(210, 50){\circle*{1.5}}
\put(220, 40){\circle*{1.5}}
\put(230, 40){\circle*{1.5}}
\put(240, 40){\circle*{1.5}}
\put(250, 30){\circle*{1.5}}
\put(260, 30){\circle*{1.5}}
\put(270, 40){\circle*{1.5}}
\put(280, 30){\circle*{1.5}}
\put(290, 20){\circle*{1.5}}
\put(300, 20){\circle*{1.5}}
\put(200, 40){\line(1, 1){10}}
\put(210, 50){\line(1, -1){10}}
\put(220, 40){\line(1, 0){20}}
\put(240, 40){\line(1, -1){10}}
\put(250, 30){\line(1, 0){10}}
\put(260, 30){\line(1, 1){10}}
\put(270, 40){\line(1, -1){20}}
\put(290, 20){\line(1, 0){10}}
\put(150, 20){\line(1, 0){10}}
\put(160, 20){\line(1, 1){20}}
\put(180, 40){\line(1, 0){20}}
\multiput(175, 40)(1, 0){100}{\line(0, 1){0.5}}

\put(216, 0){(b)}
\end{picture}
 \caption[discard]{(a) A path has pattern $\Lambda$, (b) a path avoids pattern $\Lambda$.} 
\end{center}
\end{figure}
 
Chen \cite{Chen2005} showed a classical bijection between noncrossing partial matching and Motzkin paths. To describe this bijection, for a vertex $v$ in a diagram, we called the left degree of $v$, denoted by $\mathrm{ldeg}(v)$, is the number of vertices $u$ such that $u<v$ and $(u, v)$ is an arc of the diagram. The right degree $\mathrm{rdeg}(v)$ is defined similarity. Clearly, for any vertex in the noncrossing partial matching, both left degree and right degree bounded by $1$. For a given noncrossing partial matching $M$, its corresponding Motzkin path $\theta(M)$ is defined by converting each vertex with right degree one into a step $U$, each vertex with left degree one into a step $D$, and each isolate vertex into a step $L$. Conversely, from a Motzkin path $P=p_1p_2\ldots p_n$ of length $n$, we can recover a noncrossing partial matching by the inverse of $\theta$. First, draw $n$ vertices $1, 2, \ldots, n$ from left to right on a horizontal line. Scanning up-step from right to left, and attaching each element $i$ to the leftmost available element of down-step to its right.

Denote the set of $m$-regular $d$-contact stacks of $[n]$ as $\mathcal{S}_{m, d}(n)$, its cardinality is denoted by $s_{m, d}(n)$, and the generating function is $$S_{m, d}(x) =\sum_{n\geq 0}s_{m, d}(n)x^{n}.$$ And then, we promoted the bijection between  noncrossing partial matching and Motzkin path, to build a bijection $\eta$ between $m$-regular $d$-contact stack and  $m$-regular $\Lambda$-avoiding $DLU$ paths of length $dn$ from $(0, 0)$ to $(dn, 0)$. 
$$
\eta : \mathcal{S}_{m, d}(n) \rightarrow \mathcal{G}^{\langle 0, 0\rangle }_{m, d}(dn).
$$

 For a given general stack $S\in \mathcal{S}_{m, d}(n)$, its corresponding $DLU$ path $\eta(S)$ is obtained by converting any vertex $v$ with $\mathrm{ldeg}(v)=a$ and $\mathrm{rdeg}(v)=b$ into a subpath $P _v=D^aL^{d-a-b}U^b$.

  Conversely, from a path $M=P_1P_2\ldots P_n\in\mathcal{G}^{\langle 0, 0\rangle }_{m, d}(dn)$ with $P_i=D^{a_i}L^{b_i}U^{c_i}$,  we can construct a general stack $S$ by the inverse of $\eta$  as the following steps. 
 
 \begin{enumerate}[Step 1.]
\item Draw $n$ vertices $1, 2, \ldots, n$ from left to right on a horizontal line, denoted by $S$. Let the multisets $ V_1=\{a_1\cdot 1, a_2\cdot 2, \ldots, a_n\cdot n\}, V_2=\{c_1\cdot 1, c_2\cdot 2, \ldots, c_n\cdot n\}$.
  
\item  Starting with one of the maximal vertices $i\in V_2$, connect $i$ with $j\in V_1$, where $j$ is the first vertex in $V_1$ that is larger than $i$. Add arc $(i, j)$ to $S$.
  
\item Delete $i, j$ from $V_2, V_1$, respectively.
  
\item  Return to Step 2 until $V_2$ is an empty set. 
 \end{enumerate}

As illustrated Figure \ref{fig:bij} is the corresponding $DLU$ path of Figure \ref{fig:generalstack} by $\eta$.

 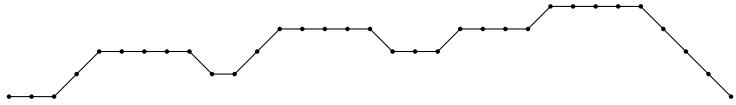
\begin{figure}[H]
\begin{center}
\setlength{\unitlength}{0.3mm}
\begin{picture}(320, 60)
  \multiput(0, 0)(10, 0){3}{\circle*{2}}        
  \put(0, 0){\line(1, 0){20}}
  \multiput(30, 10)(10, 10){2}{\circle*{2}}        
  \put(20, 0){\line(1, 1){20}}
   \multiput(50, 20)(10, 0){4}{\circle*{2}}        
  \put(40, 20){\line(1, 0){40}}
   \multiput(90, 10)(10, 0){1}{\circle*{2}}        
  \put(80, 20){\line(1, -1){10}}
   \put(90, 10){\line(1, 0){10}}
  \multiput(100, 10)(10, 10){3}{\circle*{2}}        
  \put(100, 10){\line(1, 1){20}}
   \multiput(130, 30)(10, 0){4}{\circle*{2}}        
  \put(120, 30){\line(1, 0){40}}
   \multiput(170, 20)(10, 0){3}{\circle*{2}}        
  \put(160, 30){\line(1, -1){10}}
    \put(170, 20){\line(1, 0){20}}
     \multiput(200, 30)(10, 0){4}{\circle*{2}}        
  \put(190, 20){\line(1, 1){10}}
    \put(200, 30){\line(1, 0){30}}
        \multiput(240, 40)(10, 0){5}{\circle*{2}}        
  \put(230, 30){\line(1, 1){10}}
    \put(240, 40){\line(1, 0){40}}
     \multiput(290, 30)(10, -10){4}{\circle*{2}}        
  \put(280, 40){\line(1, -1){40}}

  \end{picture}

 \caption[discard]{Corresponding $m$-regualr $\Lambda$-avoiding $DLU$ path of Figure \ref{fig:generalstack}.} 
 \label{fig:bij}
\end{center}
\end{figure}

\begin{lem}
\label{lem:bijective}
The map $\eta$ is a bijection between $m$-regular $d$-contact stacks in $[n]$ and  $m$-regular $\Lambda$-avoiding $DLU$ paths of length $dn$ from $(0, 0)$ to $(dn, 0)$ never passing below the $x$-axis.
\end{lem}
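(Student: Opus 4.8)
The plan is to verify that $\eta$ is well defined and that the inverse construction given in Steps 1--4 actually produces an $m$-regular $d$-contact stack, so that the two maps are mutually inverse. First I would check that $\eta$ lands in the right codomain. Given $S \in \mathcal{S}_{m,d}(n)$, each vertex $v$ has $\mathrm{ldeg}(v) + \mathrm{rdeg}(v) \le d$, so the block $P_v = D^{\mathrm{ldeg}(v)} L^{d - \mathrm{ldeg}(v) - \mathrm{rdeg}(v)} U^{\mathrm{rdeg}(v)}$ is a legitimate piece of length $d$; concatenating these over $v = 1, \dots, n$ gives a word of length $dn$. The noncrossing (stack) property is exactly what guarantees, via Chen's classical bijection $\theta$ read at the ``vertex'' granularity, that the partial-sum height never goes negative and returns to $0$ at the end: scanning left to right, a $D$ at vertex $v$ is matched to a $U$ at some earlier vertex $u < v$ with $(u,v)$ an arc, and the height at any point equals the number of currently ``open'' arcs, which is nonnegative and ends at $0$ since every arc closes. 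I would then translate the two constraints ``$m$-regular'' and ``$\Lambda$-avoiding'' on stacks into the path: an arc $(i,j)$ has $j - i \ge m$ iff the corresponding $UPD$ segment has $P$ containing at least $m - 1$ full pieces of horizontal steps between the relevant $U$ and $D$ (so $P \notin \{\emptyset, L^d, \dots, L^{(m-2)d}\}$); and two arcs $(i,j), (k,\ell)$ with $i < k \le \ell < j$ \emph{sharing} no endpoint — i.e. a proper nesting contributing to pattern $\Lambda$ — correspond precisely to a $UUPDD$ configuration of the stated form. Because a $d$-contact stack is noncrossing but \emph{may} have nested arcs, the role of ``$\Lambda$-avoiding'' must be pinned down exactly: I expect $\Lambda$-avoidance on the path side to correspond not to forbidding all nestings but to forbidding nestings that do not arise from the specific greedy matching rule (first available vertex), so this correspondence is the crux.

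Next I would verify the inverse direction. Starting from $M = P_1 \cdots P_n \in \mathcal{G}^{\langle 0,0\rangle}_{m,d}(dn)$ with $P_i = D^{a_i} L^{b_i} U^{c_i}$, Steps 1--4 form the multisets $V_1$ (with $i$ appearing $a_i$ times, the ``incoming'' slots) and $V_2$ (with $i$ appearing $c_i$ times, the ``outgoing'' slots) and repeatedly match a current maximal element $i$ of $V_2$ to the least $j \in V_1$ exceeding $i$. I must show: (1) such a $j$ always exists when $V_2$ is nonempty — this follows because the path ends at height $0$, so the total number of $D$'s to the right of any given $U$ is at least the number of unmatched $U$'s up to that point, which is a standard ballot-type inequality read off the nonnegativity of the height function; (2) the resulting arc set is noncrossing — because we always close the most recently opened available arc with the nearest available closer, matched pairs are properly nested or disjoint, never crossing; (3) each vertex $v$ ends with $\mathrm{ldeg}(v) = a_v \le d$ and $\mathrm{rdeg}(v) = c_v$, and $a_v + c_v \le d$ since $a_v + b_v + c_v = d$; (4) $m$-regularity and $\Lambda$-avoidance of $M$ transfer back to the arc-length and nesting constraints defining a general stack. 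Then I would check that $\eta$ and this construction are inverse to each other: applying $\eta$ to the reconstructed stack recovers each $P_v = D^{a_v} L^{b_v} U^{c_v}$ because the reconstruction assigns vertex $v$ exactly $a_v$ left-endpoints and $c_v$ right-endpoints; conversely, starting from a stack, the greedy rule in Step 2 reproduces the original arcs because in a noncrossing diagram the arc out of a given left-available vertex $i$ must go to the first subsequently available right-endpoint — any other choice would create a crossing or violate noncrossingness.

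The main obstacle I anticipate is step (4) on the path-to-stack side together with the matching ``only nestings allowed are the canonical ones'' claim: one must argue that the greedy reconstruction can only fail to invert $\eta$ (or fail to be noncrossing, or fail $m$-regularity) exactly when $M$ contains a forbidden $UUPDD$ pattern or a short $UPD$ segment. Concretely, if the stack had an arc $(i,j)$ nested strictly inside an arc $(k,\ell)$ with all four endpoints distinct, I would trace how $\eta$ records this as two $U$'s (one in $P_k$, one in $P_i$ with $k < i$) at heights $h$ and $h+1$ say, and two matching $D$'s (in $P_j$ and $P_\ell$) returning through heights $h+1$ and $h$, with the intervening subpath $P$ staying weakly above height $h$ — i.e. exactly a pattern $\Lambda$; so $\Lambda$-avoidance is equivalent to the absence of such ``fully interior'' nestings, which is precisely the class of noncrossing diagrams that the greedy rule reproduces. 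Establishing this equivalence cleanly — handling the boundary cases where arcs share an endpoint, and where $P$ touches height $h$ in the interior — is the delicate bookkeeping; once it is in place, well-definedness, injectivity via the explicit inverse, and surjectivity all follow. I would present the argument by first fixing the granular version of Chen's bijection, then layering on the degree, length, and nesting refinements one at a time, checking each adds exactly one matching constraint on each side.
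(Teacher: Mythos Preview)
Your overall plan is sound, but you have misread the definition of the pattern $\Lambda$, and that misreading is exactly what creates the ``crux'' you spend most of the proposal on. Look again at the definition: a $\Lambda$ pattern is a subpath $UUPDD$ \emph{with} $UU\subset P_i$ and $DD\subset P_j$ for some pieces $P_i,P_j$---the two up-steps must lie in the \emph{same} piece, and likewise the two down-steps. Under $\eta$, up-steps in $P_i$ record right-degree at vertex $i$ and down-steps in $P_j$ record left-degree at vertex $j$; the height condition on $P$ forces the outer $U$ to match the outer $D$ and the inner $U$ the inner $D$ in the canonical noncrossing matching. Hence a $\Lambda$ pattern corresponds to \emph{two} arcs from $i$ to $j$, i.e.\ a repeated edge, which is precisely what the definition of a stack (a diagram, hence a simple graph on $[n]$) excludes. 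The paper's proof is exactly this one-line observation.

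Your proposed interpretation---that $\Lambda$ encodes nestings $(k,\ell)\supset(i,j)$ with four distinct endpoints, or ``nestings that do not arise from the greedy rule''---is therefore aimed at the wrong target. Stacks may and do contain arbitrary nestings; nothing on the path side needs to encode a restriction on them. Once the reading of $\Lambda$ is corrected, the delicate bookkeeping you anticipate evaporates: given the degree data $(a_v,c_v)$ read from a $DLU$ path, the noncrossing arc set is uniquely determined by the standard balanced-parenthesis matching, and the only remaining question is whether that matching ever produces a repeated edge. Your items (1)--(3) and the mutual-inverse check are fine; item (4) reduces to the single equivalence $\Lambda\Leftrightarrow\text{multi-edge}$ together with the $m$-regularity transfer, which is the routine part you already sketched.
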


\begin{proof}

By the definition of map $\eta$, we can obtain that any general stack in $[n]$ corresponds to a $DLU$ path of length $dn$ from $(0, 0)$ to $(dn, 0)$ never passing below the $x$-axis.

For any $S\in \mathcal{S}_{m, d}(n)$, let $M=\eta(S)=P_1P_2\ldots P_n$. We claim that if $S$ is $m$-regular then $M$ is $m$-regular. If there exist an subpath $UPD$ of $M$, where $P\in\{\emptyset, ~L^d, ~L^{2d}, ~\ldots~, L^{(m-2)d}\}$, suppose the $U$ is the last up-step of $P_i$, and $D$ is the first down-step of $P_j$. As for any piece $P_k$ of path $M$, $D$ must precede $U$, it follows that $i<j$. Then $(i, j)$ is an arc of $S$ with $j-i<m$ by the corresponding $\eta^{-1}$. Then conflicting to the $m$-regular of $S$.

Conversely, assume $M$ is $m$-regular. It is clear that $S$ is $m$-regular since each arc of $S$ contains at least $m-1$ isolate vertices.

Finally, we show that $M=\eta(S)=P_1P_2\ldots P_n$ is $\Lambda$-avoiding. Suppose $M$ has pattern $\Lambda$, then there is a subpath $UUPDD$ of $M$ with $P$ starting and ending at the same height $h$ and never passing below the height $h$. Let $UU$ be two up-steps of $P_i$ and $DD$ be two down-steps of $P_j$, which corresponding to two right degrees of vertex $i$ and two left degrees of vertex $j$, respectively. Thus $(i, j)$ is a multiple arc of $S$  by the corresponding $\eta^{-1}$, contradicting with the definition of stack. This complete the proof. $\qedhere\hfill\blacksquare$
\end{proof}

\section{Enumerating method of general stacks} 
In this section, we describe a unified enumerating method of general stack with any $m,d\in\mathbb{Z}^+$ through the enumeration of $DLU$ path.
We claim that a $DLU$ path $M$ from $(0, s)$ to $(dn, t)$, where $s, t\in\mathbb{N}, s, t\leq d$ is $\Lambda$-avoiding if the $DLU$ path $L^{d-s}U^sMD^tL^{d-t}$ starting and ending at $x$-axis is $\Lambda$-avoiding. Similarly, we can describe the arbitrary $DLU$ path is $m$-regular. Denoting the set of $m$-regular $\Lambda$-avoiding $DLU$ paths of length $dn$ from $(0, s)$ to $(dn, t)$ as $\mathcal{G}^{\langle s, t\rangle }_{m, d}(dn)$, the number of $m$-regular $\Lambda$-avoiding $DLU$ paths of length $dn$ from $(0, s)$ to $(dn, t)$ as $g^{\langle s, t\rangle }_{m, d}(dn)$, and the generating function is $$G^{\langle s, t\rangle }_{m, d}(x) =\sum_{n\geq 0}g^{\langle s, t\rangle }_{m, d}
(dn)x^{dn}.$$

\begin{lem}
The number of $m$-regular $\Lambda$-avoiding $DLU$ paths of length $dn$ from $(0, s)$ to $(dn, t)$ never passing below the $x$-axis, satisfying the symmetry.
$$
g^{\langle s,t \rangle}_{m, d}(dn)=g^{\langle t, s\rangle}_{m, d}(dn)
$$
\end{lem}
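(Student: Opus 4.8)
The plan is to construct an explicit involution on the set $\mathcal{G}^{\langle s,t\rangle}_{m,d}(dn)$ that interchanges the roles of the two endpoint heights $s$ and $t$, thereby exhibiting a bijection with $\mathcal{G}^{\langle t,s\rangle}_{m,d}(dn)$. The natural candidate is the \emph{reversal} map: given a $DLU$ path $M = P_1 P_2 \ldots P_n$ from $(0,s)$ to $(dn,t)$, read it backwards and swap the role of up-steps and down-steps. Concretely, if $P_i = D^{a_i} L^{b_i} U^{c_i}$, define $\widehat{P}_i = D^{c_{n+1-i}} L^{b_{n+1-i}} U^{a_{n+1-i}}$ and set $\widehat{M} = \widehat{P}_1 \widehat{P}_2 \ldots \widehat{P}_n$. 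Geometrically this is the point reflection of $M$ through the center $(dn/2,\,(s+t)/2)$ of the bounding box: each piece keeps its $L$-count, and its $D$- and $U$-counts are swapped, so $\widehat{P}_i$ is again of the form $D^a L^b U^c$ with $a+b+c = d$, i.e.\ $\widehat M$ is a legitimate $DLU$ path. Since $M$ goes from height $s$ to height $t$, the reflected path $\widehat M$ goes from height $t$ to height $s$.

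First I would check that $\widehat M$ never passes below the $x$-axis. The height of $\widehat M$ after the first $j$ pieces equals $s + t$ minus the height of $M$ after its first $n-j$ pieces (reflection through the box center), and more carefully one checks at the level of individual steps that the set of vertex heights of $\widehat M$ is exactly $\{\,s+t-h : h \text{ a vertex height of } M\,\}$. Hence $\widehat M$ stays weakly above $0$ iff $M$ stays weakly below $s+t$. This last condition is \emph{not} automatic, so the genuine content here is to argue it using the $\Lambda$-avoiding hypothesis together with $s,t \le d$: a $\Lambda$-avoiding $DLU$ path from height $s$ to height $t$ that starts and stays $\ge 0$ cannot rise above height $s+t$ wait — more precisely, by applying the same reflection symmetry to the \emph{completed} path $L^{d-s}U^s M D^t L^{d-t}$ (which is $\Lambda$-avoiding and starts and ends on the $x$-axis by the convention introduced just before the lemma), the reversal of this completed path is $L^{d-t}U^t \widehat M D^s L^{d-s}$, and I must show this completed reversal still never dips below $0$; this reduces the boundedness question for $\widehat M$ to the corresponding statement for the completed path, where "never below $0$" for the reversal is equivalent to "never below $0$" for the original by the height-flip identity applied with $s+t$ replaced by $0+0=0$. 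So for the completed paths the nonnegativity of the reversal is immediate, and it descends to $\widehat M$.

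Next I would verify that $\widehat M$ is $m$-regular and $\Lambda$-avoiding, using the characterizations in terms of the completed path. For $m$-regularity: a subpath $U P D$ of $\widehat M$ with $U$ the last up-step of a piece and $D$ the first down-step of a later piece corresponds, under reversal, to a subpath $U P' D$ of $M$ with $P'$ the reverse of $P$; since reversal fixes the multiset of step types inside and the forbidden set $\{\emptyset, L^d, L^{2d},\ldots\}$ is reversal-invariant, $m$-regularity is preserved (and likewise for the completed paths). For $\Lambda$-avoidance: a pattern $UUPDD$ in $\widehat M$ — two up-steps in one piece, two down-steps in a later piece, with $P$ confined weakly above its own endpoints' common height — reverses to a pattern $UU P'' DD$ in $M$ of exactly the same shape (the "confined above height $h$" condition becomes "confined above height $h$" for the reflected segment since the reflection inside a self-contained segment is again a reflection through that segment's box center, which preserves the weak-above relation between the segment and its endpoints). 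Hence $M$ has pattern $\Lambda$ iff $\widehat M$ does; so $\widehat M$ is $\Lambda$-avoiding.

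Finally, reversal is clearly an involution: applying the construction twice reverses the piece order back and swaps $a_i \leftrightarrow c_i$ twice, recovering $M$. Therefore $M \mapsto \widehat M$ is a bijection $\mathcal{G}^{\langle s,t\rangle}_{m,d}(dn) \to \mathcal{G}^{\langle t,s\rangle}_{m,d}(dn)$, which gives $g^{\langle s,t\rangle}_{m,d}(dn) = g^{\langle t,s\rangle}_{m,d}(dn)$. I expect the main obstacle to be the nonnegativity check for $\widehat M$: one must be careful that "never passing below the $x$-axis" is phrased relative to the \emph{completed} path (per the convention stated before the lemma) and not naively for the open path from $(0,s)$ to $(dn,t)$, and then the height-flip identity does the rest; everything else (step-type bookkeeping, reversal-invariance of the forbidden patterns, the involution property) is routine.
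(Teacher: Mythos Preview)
Your map $M\mapsto\widehat M$ is exactly the paper's mirror reflection $\rho$, so the overall approach coincides with the paper's proof. However, your geometric description of $\widehat M$ is wrong, and this leads you into an unnecessary and internally inconsistent detour on the nonnegativity check. The reversal-with-$U\!\leftrightarrow\!D$-swap is \emph{not} the point reflection through the box center; it is the reflection about the vertical line $x=dn/2$. A direct computation shows that the height of $\widehat M$ at position $j$ equals the height of $M$ at position $dn-j$ (not $s+t$ minus that height): writing $h_M(k)$ for the height of $M$ after $k$ unit steps, one has
\[
h_{\widehat M}(j)\;=\;t+\#D\bigl(s_{dn-j+1}\cdots s_{dn}\bigr)-\#U\bigl(s_{dn-j+1}\cdots s_{dn}\bigr)\;=\;t-\bigl(h_M(dn)-h_M(dn-j)\bigr)\;=\;h_M(dn-j).
\]
In particular the multiset of vertex heights of $\widehat M$ is \emph{identical} to that of $M$, so $\widehat M\ge 0$ is immediate from $M\ge 0$; no appeal to $\Lambda$-avoidance or to the completed path is needed. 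Your own workaround is in fact inconsistent with your stated height formula: if the heights of the completed path transformed as $h\mapsto 0+0-h=-h$, the reversed completed path would lie \emph{below} the $x$-axis, not above it. Once you replace the ``center reflection'' picture by the correct vertical-line reflection, the nonnegativity is a one-line observation, and the remainder of your argument (preservation of the $D^aL^bU^c$ piece form, of $m$-regularity, of $\Lambda$-avoidance, and the involution property) is correct and matches the paper.
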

\begin{proof}
 To prove symmetry, we construct the mirror reflection,
$$
\rho : \mathcal{G}^{\langle s, t \rangle}_{m, d}(dn)\rightarrow \mathcal{G}^{\langle t, s \rangle }_{m, d}(dn)
$$
satisfying that for any $M=P_1P_2\ldots P_{n}\in\mathcal{G}^{\langle s, t\rangle }_{m, d}(dn)$, $\rho(M)=\overline{P}_{n}\overline{P}_{n-1}\ldots \overline{P}_1$, where
$$
\overline{P}_i=\overline{D^{a_i}L^{b_i}U^{c_i}}=D^{c_i}L^{b_i}U^{a_i}
$$
Apperently the inverse mapping is also $\rho$.

Obviously, $\rho(M)$ is a $DLU$ path. For any $UUPDD\subset M$, then $UU\rho(P)DD\subset M$, means $M$ is $\Lambda$-avoiding if and only if $\rho(M)$ is $\Lambda$-avoiding. By the same way, we have $M$ is $m$-regular if and only if $\rho(M)$ is $m$-regular.

Thus the map $\rho$ is bijective, and so the symmetry is obtained.$\qedhere\hfill\blacksquare$
\end{proof}

To this point, we only need to consider the enumerating method for $DLU$ paths starting higher than ending. 

Recall that a Dyck path is prime \cite{deutsch1999dyck} if it never goes back to the $x$-axis in the middle steps. Similarity, we claim that a $DLU$ path of length $dn$ from $(0, s)$ to $(dn, t)$ is prime if it  never goes back to the $x$-axis in the middle steps. Denoting the set of $m$-regular $\Lambda$-avoiding prime $DLU$ paths of length $dn$ from $(0, s)$ to $(dn, t)$ as $\mathcal{A}^{\langle s, t\rangle }_{m, d}(dn)$, and its generating function as $A^{\langle s, t\rangle }_{m, d}(x)$.

\begin{lem}
\label{lem: A}
For $ 1 \leq t \leq s$, the generating function $A^{\langle s, t\rangle }_{m, d}(x)$ satisfying the following recurrence relation.
\begin{eqnarray}
\label{equ:A}
    A^{\langle s, t\rangle }_{m, d}(x)=G^{\langle s-1, t-1\rangle }_{m, d}(x)-A^{\langle s-1, t-1\rangle }_{m, d}(x),
\end{eqnarray}

where $2 \leq t \leq s$. Let $C_{m, d}(x)=\frac{x^{d(m-1)}-1}{x^d-1}$, then the initial condition is
\begin{eqnarray*}
\left\{\begin{aligned}
  &A^{\langle 1, 1\rangle }_{m, d}(x)=G^{\langle 0, 0\rangle }_{m, d}(x)-C_{m, d}(x),\\
  &A^{\langle 2, 1\rangle }_{m, d}(x)=G^{\langle 1, 0\rangle }_{m, d}(x),\\
  &A^{\langle 3, 1\rangle }_{m, d}(x)=G^{\langle 2, 0\rangle }_{m, d}(x),\\
  &\ldots\ldots\\
  &A^{\langle d, 1\rangle }_{m, d}(x)=G^{\langle d-1, 0\rangle }_{m, d}(x).\\
 \end{aligned}\right.
\end{eqnarray*}
\end{lem}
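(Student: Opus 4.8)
\textbf{Proof proposal for Lemma \ref{lem: A}.}

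The plan is to set up a bijection on prime $DLU$ paths by stripping off the mandatory first and last pieces, and then to handle the lowest level of the path carefully since that is where the $\Lambda$-pattern and $m$-regularity constraints interact. First I would observe that a prime $DLU$ path $M$ of length $dn$ from $(0,s)$ to $(dn,t)$ with $1\le t\le s$ never returns to the $x$-axis strictly between its endpoints. Since the first piece $P_1$ has the form $D^{a_1}L^{b_1}U^{c_1}$ and the path starts at height $s\ge 1$ and must stay at height $\ge 1$ after the first vertex, we must have $a_1=0$ when $s\le 1$; more to the point, for the path to be prime we cannot let the first piece bring us down to $0$. I would argue that peeling off the very first down-step and the very last down-step (equivalently, raising the whole picture appropriately) gives a path that lives at height $\ge 0$, i.e.\ an ordinary (not necessarily prime) $DLU$ path from $(0,s-1)$ to $(dn,t-1)$, and conversely every such path, when we glue back the steps, either is prime or returns to the $x$-axis. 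This is the source of the ``$G-A$'' on the right-hand side of \eqref{equ:A}: $G^{\langle s-1,t-1\rangle}_{m,d}$ counts all lifted paths, and we must subtract the ones that, after re-gluing, fail to be prime, which are exactly the prime paths from $(0,s-1)$ to $(dn,t-1)$, counted by $A^{\langle s-1,t-1\rangle}_{m,d}$.

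The key steps, in order, are: (1) make precise the ``lifting'' map that, given $M\in\mathcal A^{\langle s,t\rangle}_{m,d}(dn)$ with $t\ge 2$, produces a $DLU$ path $\widehat M\in\mathcal G^{\langle s-1,t-1\rangle}_{m,d}(dn)$; the natural choice is to decrease by one the number of down-steps in the first piece that touches the eventual floor and the number of up-steps in the last such piece — but cleaner is to note that a prime path from $(0,s)$ staying above $0$ with $s,t\ge 1$ is the same data as a path from $(0,s-1)$ staying above $-1$, i.e.\ shift the floor; (2) check this shift preserves $m$-regularity (the constraint is about subpaths $UPD$ with $P\in\{\emptyset,L^d,\dots,L^{(m-2)d}\}$, which is translation-invariant) and $\Lambda$-avoidance (same reasoning, the pattern $UUPDD$ is defined by relative heights only); (3) identify the image: $\widehat M$ ranges over all of $\mathcal G^{\langle s-1,t-1\rangle}_{m,d}(dn)$ except those $\widehat M$ which, after shifting back up, would touch the $x$-axis in the interior — and those are precisely the paths that stay strictly above $-1$ in the interior, i.e.\ after shift-back are prime, giving the bijection $\mathcal A^{\langle s,t\rangle}_{m,d}(dn)\leftrightarrow \mathcal G^{\langle s-1,t-1\rangle}_{m,d}(dn)\setminus \mathcal A^{\langle s-1,t-1\rangle}_{m,d}(dn)$; (4) translate to generating functions to get \eqref{equ:A}.

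For the initial conditions I would treat $t=1$ separately, since then the shifted path has ending height $0$ and the ``primeness'' obstruction disappears: a prime path from $(0,s)$ to $(dn,1)$ staying above $0$ corresponds under the floor-shift to a path from $(0,s-1)$ to $(dn,0)$ staying above $0$ with no interior return condition to subtract, which is exactly $\mathcal G^{\langle s-1,0\rangle}_{m,d}(dn)$; this gives $A^{\langle s,1\rangle}_{m,d}=G^{\langle s-1,0\rangle}_{m,d}$ for $s\ge 2$. The case $A^{\langle 1,1\rangle}_{m,d}$ is the delicate one: here a prime path from $(0,1)$ to $(dn,1)$ staying strictly above $0$ in the interior, shifted down, is a path from $(0,0)$ to $(dn,0)$ staying at height $\ge 0$; but the prime condition forces it to stay strictly above $-1$ except at the endpoints, so the shifted-down path is either the empty-type path that stays on the floor or a genuine positive path — we must subtract from $G^{\langle 0,0\rangle}_{m,d}$ the paths that are ``all floor'', i.e.\ of the form $L^{d}L^{d}\cdots L^{d}$ (possibly with the $m$-regularity making only lengths $0,d,2d,\dots,(m-2)d$ allowable at the top of an arc), and the generating function of these all-horizontal pieces-at-height-one configurations is exactly $C_{m,d}(x)=\sum_{k=0}^{m-2}x^{dk}=\frac{x^{d(m-1)}-1}{x^d-1}$. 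I expect the main obstacle to be making step (1) genuinely rigorous — precisely describing which piece absorbs the removed step and verifying that the $DLU$-piece structure ($P_i\in\{D^aL^bU^c:a+b+c=d\}$) is respected under the operation — together with pinning down why exactly the factor $C_{m,d}(x)$, and not a more complicated correction, appears in the $A^{\langle 1,1\rangle}_{m,d}$ case; once the floor-shift viewpoint is adopted, steps (2)–(4) are routine.
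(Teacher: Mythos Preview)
Your floor-shift map $\phi$ is exactly the map the paper uses, and your treatment of the $t=1$ initial conditions (including the appearance of $C_{m,d}$ in the $\langle 1,1\rangle$ case via the forbidden paths $\{\emptyset,L^d,\dots,L^{(m-2)d}\}$) is right in spirit. The gap is in step~(3), and it is a real one.

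You argue that the image of $\phi$ is $\mathcal G^{\langle s-1,t-1\rangle}_{m,d}\setminus\mathcal A^{\langle s-1,t-1\rangle}_{m,d}$ because the excluded $\widehat M$ are ``those which, after shifting back up, would touch the $x$-axis in the interior''. But shifting up takes a path with height $\ge 0$ to one with height $\ge 1$, so \emph{every} shifted-up path is automatically prime; your stated obstruction is empty and would yield $A^{\langle s,t\rangle}_{m,d}=G^{\langle s-1,t-1\rangle}_{m,d}$, not \eqref{equ:A}. The correct mechanism is the one constraint you declared harmless in step~(2): $\Lambda$-avoidance is \emph{not} preserved under shift-up when $s,t\ge 2$. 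Recall that $\Lambda$-avoidance of a path from $(0,s)$ to $(dn,t)$ is defined through the augmented path $L^{d-s}U^{s}\,M\,D^{t}L^{d-t}$, whose boundary pieces depend on $s,t$. When $s,t\ge 2$ those boundary pieces contain $UU$ and $DD$; if the shifted-up path $M$ stayed at height $\ge 2$ throughout, then $U\!U\,(U^{s-2}MD^{t-2})\,D\!D$ would be a $\Lambda$-pattern. Hence $\Lambda$-avoidance forces $M$ to reach height~$1$, so $\widehat M=\phi(M)$ reaches height~$0$ and therefore $\widehat M\notin\mathcal A^{\langle s-1,t-1\rangle}_{m,d}$. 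Conversely, if $\widehat M$ does touch height~$0$, the shifted-up $M$ reaches height~$1$ and one checks no $\Lambda$-pattern is created by the enlarged boundary pieces. That is the argument the paper gives (tersely) for the $\langle 2,2\rangle$ case before passing to induction.

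In short: the $-A^{\langle s-1,t-1\rangle}_{m,d}$ term records a failure of $\Lambda$-avoidance, not of primeness, and your step~(2) needs to be replaced by the observation that shift-down preserves $\Lambda$-avoidance while shift-up can create a new $\Lambda$-pattern from the boundary $UU$ and $DD$ exactly when the original path never touched the floor.
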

\begin{proof}
When $s=1, t=1$, since any path $M$ in $\mathcal{A}_{m, d}^{\langle 1, 1\rangle }$ does not intersect $x$-axis, then $M$ always on the height $1$. Assume that $\mathcal{C}_{m, d}=\{\emptyset, L^d, L^{2d}, \ldots, L^{(m-2)d}\}$. Let map $\phi$  represent the path moves down one step, as path is  $m$-regular, then $$\phi(\mathcal{A}_{m, d}^{\langle 1, 1\rangle })=\mathcal{G}_{m, d}^{\langle 0, 0\rangle }\backslash\mathcal{C}_{m, d}, $$
Apparently, $\phi$ is bijective between $\mathcal{A}_{m, d}^{\langle 1, 1\rangle }$ and $\mathcal{G}_{m, d}^{\langle 0, 0\rangle }\backslash\mathcal{C}_{m, d}$, hence $$A^{\langle 1, 1\rangle }_{m, d}(x)=G^{\langle 0, 0\rangle }_{m, d} (x)-C_{m, d}(x).$$

When $s=2, t=1$, similarly we have $A^{\langle 2, 1\rangle }_{m, d}(x)=G^{\langle 1, 0\rangle }_{m, d}(x)$.

When $s=2, t=2$, considering that the path avoids pattern $\Lambda$, then $\forall M\in\mathcal{A}^{\langle 2, 2\rangle }_{m, d}$, we have $M$ must reach height $1$ but not height $0$. Hence, $\phi(\mathcal{A}^{\langle 2, 2\rangle }_{m, d})$ is the subset of $\mathcal{G}_{m, d}^{\langle 1, 1\rangle } $ which must intersect the $x$-axis, means
$$\phi(\mathcal{A}^{\langle 2, 2\rangle }_{m, d})=\mathcal{G}^{\langle 1, 1\rangle }_{m, d}-\mathcal{A}^{\langle 1, 1\rangle }_{m, d}.$$

Using mathematical induction, assume that for any $i, j$, where $ 2<i<j<t<s$, holds that $A^{\langle i, j\rangle }_{d}(x)=G^{\langle i-1, j-1\rangle }_{d}(x)-A^{\langle i-1, j-1\rangle }_{d}(x)$.

Let arbitrary $ M \in \mathcal{A}^{\langle s, t\rangle }_d$, we have $M$ must reach height $1$ but not height $0$, thus  $\phi(\mathcal{A}^{\langle s, t\rangle }_{m, d})$ is the subset of $\mathcal{G}_{m, d}^{\langle s-1, t-1\rangle } $ which must intersect the $x$-axis, means
$$\phi(\mathcal{A}^{\langle s, t\rangle }_{m, d})=\mathcal{G}^{\langle s-1, t-1\rangle }_{m, d}-\mathcal{A}^{\langle s-1, t-1\rangle }_{m, d}.$$

Therefore, generating function $A^{\langle s, t\rangle }_{m, d}(x)$ satisfies the following equation.$\qedhere\hfill\blacksquare$
\end{proof}

\begin{proof}[Proof of Theorem \rm{\ref{thm:syseq}}]
According to Lemma \ref{lem:bijective}, we can directly obtain equation \eqref{equ:samenum}. we distinguish the discussion according to the values of $s$ and $t$ to focus on the enumeration of  $DLU$ paths  $\mathcal{G}^{\langle 0, 0\rangle }_{m, d}$.

Firstly, we consider the enumeration of $\mathcal{G}^{\langle 0, 0\rangle }_{m, d}$. For any path $M=P_1P_2\ldots\in \mathcal{G}^{\langle 0, 0\rangle }_{m, d}$, if the length of $M$ is $0$, then it corresponds to only one empty path, means $G^{\langle 0, 0\rangle }_{m, d}(0)=1$. If the length of $M$ is greater than $0$, consider the first piece 
$P_1=D^0L^bU^c$ with $b+c=d, 0\leq b, c\leq d$ for the path $M$. Since the path is never passing below the $x$-axis, then the remaining $P_2P_3\ldots$ is a path in $\mathcal{G}^{\langle c, 0\rangle }_{m, d}$. Hence we have
\begin{eqnarray}
\label{equ:g00}
G^{\langle 0, 0\rangle }_{m, d}(x)=1+x^d\sum\limits_{c=0}^{d}G^{\langle c, 0\rangle }_{m, d}(x).
\end{eqnarray}

Next, we consider the enumeration of $\mathcal{G}^{\langle s, t\rangle }_{m, d}$, where $t \leq s$ and $s, t$ are not simultaneously $0$.  For arbitrary $M=P_1P_2P_3\ldots\ldots \in \mathcal{G}^{\langle s, t\rangle }_{m, d} $, if $M$ has no intersection with the $x-$axis, then $t\geq 1$,  means that $M\subset \mathcal{A}^{\langle s, t\rangle }_{m, d}$. By calculating the recurrence formula \eqref{equ:A}, we can derive
\begin{eqnarray}
\label{equ:Aexpand}
A^{\langle s, t\rangle }_{m, d}(x)=\delta_{s, t}(-1)^tC_{m, d}+\sum\limits_{i=0}^{t-1}(-1)^{t-i+1}G^{\langle s-t+i, i\rangle }_{m, d}.
\end{eqnarray}
If $M$ must intersect the $x$-axis, consider the mothod of first return decomposition \cite{deutsch1999dyck}, then let the first piece $P_k=D^{a_k}L^{b_k}U^{c_k}$ returning to the $x$-axis, we have $a_k\geq 1$. Note that we get a path decomposition $M=M_1P_kM_2$, means that $M_1, M_2$ can be builded by $a_k, c_k$.
\begin{enumerate}
    \item For a preorder path $M_1=P_1P_2\ldots P_{k-1}$ of $P_k$, we find that $M_1$ is a prime path from height $s$ to height $a_k$, i.e. it is path in $\mathcal{A}^{\langle s, a_k\rangle }_{m, d}$. 

   \item For the subsequence path $M_2=P_{k+1}P_{k+2}\ldots\ldots$ of $P_k$, we have $M_2$ is the paths from height $c_k$ to height $t$, then  it is path in $\mathcal{A}^{\langle c_k, t\rangle }_{m, d}$.  
\end{enumerate}
In summary, we can obtain that the generating function of $\mathcal{G}^{\langle s, t\rangle }_{m, d}$ satisfies the follwing equation,
\begin{eqnarray}
\label{equ:gst}
G^{\langle s, t\rangle }_{m, d}(x)=A^{\langle s, t\rangle }_{m, d}(x)+\sum\limits_{
a_k+c_k\leq d, 
\atop
a_k\geq 1 
}\Big[A^{\langle u, v\rangle }_{m, d}(x)\cdot x^d \cdot G^{\langle c_k, t\rangle }_{m, d}(x)\Big], 
\end{eqnarray}
where $u=\max(s, a_k), v=\min(s, a_k)$.
  
Substituting equation \eqref{equ:Aexpand} into equation \eqref{equ:gst}, we can obtained the equation of $G^{\langle s, t\rangle }_{m, d}(x)$ for $0\leq t \leq d-1, ~0<s\leq d, ~t<s$. Hence, we can combine with the above equations of  $G^{\langle s, t\rangle }_{m, d}(x)$ and equation \eqref{equ:g00} to derive the system of equations \eqref{equ:syseq}.$\qedhere\hfill\blacksquare$
\end{proof}

\section{Special cases} 
In this section, the enumerating results on $d=1, 2, 3$ are given by the counting methods of the previous section for general stacks.

A simple stack is a $1$-contact stack. According to  Theorem \ref{thm:syseq}, we can list the following system of equations of the generating function, which relating the $DLU$ path corresponding the $m$-regular simple stacks.
\begin{eqnarray*}
\left\{\begin{aligned}
  &G^{\langle 0, 0\rangle }_{m, 1}=1+x^2\Big(G^{\langle 0, 0\rangle }_{m, 1}+G^{\langle 1, 0\rangle }_{m, 1}\Big)\\
  &G^{\langle 1, 0\rangle }_{m, 1}=x^2\Big(G^{\langle 0, 0\rangle }_{m, 1}-C_{m, 1}\Big)G^{\langle 0, 0\rangle }_{m, 1}
    \end{aligned}\right.
\end{eqnarray*}
where $C_{m, 1}(x)=\frac{x^{m-1}-1}{x-1}$.

Solving by Maple yields that
\begin{eqnarray*}
(x^3-x^2)\bigg[G^{\langle 0, 0\rangle }_{m, 1}(x)\bigg]^2+(-x^{m+1}+2x^2-2x+1)G^{\langle 0, 0\rangle }_{m, 1}(x)+x-1=0.
\end{eqnarray*}
 Depending on equation \eqref{equ:samenum}, we have
 \begin{eqnarray*}
S_{m, 1}(x)=\frac{\sum\limits_{l=0}^mx^l-2x-\sqrt{\bigg(\sum\limits_{l=0}^mx^l-2x\bigg)^2-4x^2}}{2x^2}
\end{eqnarray*}
When $m=1$, we have
$$S_{1, 1}(x)=\frac{1-x-\sqrt{1-2x-3x^2}}{2x^2}, $$
which is the generating function of $Motzkin$ number.
\begin{table}[h]
\addtocounter{table}{0}
 \vspace{-5pt}
 \renewcommand{\tablename}{Table}
 \caption{Enumeration of $m$-regular simple stack}
 \label{table:simple}
    \centering
    \begin{tabular}{lcccccccccc}
    \hline
        $n$ & 1 & 2 & 3 & 4 & 5 & 6 & 7 & 8 & 9 & 10 \\\hline
        $s_{1, 1}(n)$  & 1 & 2 & 4 & 9 & 21 & 51 & 127 & 323 & 835 &2188 \\
        $s_{2, 1}(n)$  & 1 & 1 & 2 & 4 & 8 & 17 & 37 & 82 & 185  &423\\
        $s_{3, 1}(n)$  & 1 & 1 & 1 & 2 & 4 & 8 & 16 & 33 & 69 &146 \\
        $s_{4, 1}(n)$  & 1 & 1 & 1 & 1 & 2 & 4 & 8 & 16 & 32 &65 \\
        $s_{5, 1}(n)$ & 1 & 1 & 1 & 1 & 1 & 2 & 4 & 8 & 16 &32 \\
        $s_{6, 1}(n)$  & 1 & 1 & 1 & 1 & 1 & 1 & 2 & 4 & 8 &16  \\\hline
    \end{tabular}
\end{table}

The enumeration results shown in Table \ref{table:simple} are commit with those of $m$-regular simple stacks counted by 
Stein et al.\cite{stein1979rna}, who derived the same generating function.

A linear stack is a $2$-contact stack. Similarly enumerating method as for the simple stack, we have
\begin{eqnarray*}
\left\{\begin{aligned}
  G^{\langle 0, 0\rangle }_{m, 2}&=1+x^2\sum\limits_{i=0}^{2}G^{\langle i, 0\rangle }_{m, 2}\\
G^{\langle 1, 0\rangle }_{m, 2}&=x^2\bigg[\Big(G^{\langle 0, 0\rangle }_{m, 2}-C_{m, 2}\Big)\sum\limits_{i=0}^{1}G^{\langle i, 0\rangle }_{m, 2}+G^{\langle 1, 0\} }_{m, 2}G^{\langle 0, 0\rangle }_{m, 2}\bigg]\\
  G^{\langle 1, 1\rangle }_{m, 2}&=G^{\langle 0, 0\rangle }_{m, 2}-C_{m, 2}+x^2\bigg[\Big(G^{\langle 0, 0\rangle }_{m, 2}-C_{m, 2}\Big)\sum\limits_{i=0}^{1}G^{\langle i, 1\rangle }_{m, 2}+G^{\langle 1, 0\rangle }_{m, 2}G^{\langle 0, 1\rangle }_{m, 2}\bigg]\\
  G^{\langle 2, 0\rangle }_{m, 2}&=x^2\bigg[G^{\langle 1, 0\rangle }_{m, 2}\sum\limits_{i=0}^{1}G^{\langle i, 0\rangle }_{m, 2}+\Big(G^{\langle 1, 1\rangle }_{m, 2}-G^{\langle 0, 0 \rangle }_{m, 2}+C_{m, 2}\Big)G^{\langle 0, 0\rangle }_{m, 2}\bigg]
  \end{aligned}\right.
\end{eqnarray*}
where $C_{m, 2}(x)=\frac{x^{2(m-1)}-1}{x^2-1}$.

Solving by Maple yields that
\begin{eqnarray*}
\sum\limits_{i=0}^{5}c_{m, 2, i}(x^2)\Big[G^{\langle 0, 0\rangle }_{m, 2}(x)\Big]^i=0.
\end{eqnarray*}
then
\begin{eqnarray*}
\sum\limits_{k=0}
^{5}c_{m, 2, k}(x)S_{m, 2}^k(x)=0, 
\end{eqnarray*}
where
\begin{eqnarray*}
\begin{aligned}
c_{m, 2, 5}(x)&=4 x^{5} (x - 1)^{4}, \\
c_{m, 2, 4}(x)&=-11 x^{m +7}+4 x^{8}+33 x^{m +6}-x^{7}-33 x^{m +5}-25 x^{6}+11 x^{m +4}\\&+41 x^{5}-23 x^{4}+4 x^{3}, \\
c_{m, 2, 3}(x)&= 11 x^{2 m +5}-8 x^{m +6}-22 x^{2 m +4}-6 x^{m +5}+11 x^{2 m +3}+8 x^{6}\\&+44 x^{m +4}-5 x^{5}-38 x^{m +3}-22 x^{4}+8 x^{m +2}+27 x^{3}-8 x^{2}, \\
c_{m, 2, 2}(x)&=-5 x^{3 m +3}+6 x^{2 m +4}+5 x^{3 m +2}+3 x^{m +5}+8 x^{2 m +3}\\&-21 x^{m +4}-19 x^{2 m +2}-3 x^{5}+8 x^{m +3}+5 x^{2 m +1}\\&+15 x^{4}+20 x^{m +2}-11 x^ {3}-10 x^{1+m}-6 x^{2}+5 x, \\
c_{m, 2, 1}(x)&=x^{4 m +1}-2 x^{3 m +2}-3 x^{2 m +3}-3 x^{3 m +1}+12 x^{2 m +2}+x^{3 m}\\&+6 x^{m +3}-18 x^{m +2}-3 x^{2 m}-3 x^{3}+5 x^{1+m}+8 x^{2}+3 x^{m}-3 x -1, \\
c_{m, 2, 0}(x)&=x^{3 m +1}-x^{3 m}-3 x^{2 m +1}+3 x^{2 m}+3 x^{1+m}-3 x^{m}-x +1.
\end{aligned}
\end{eqnarray*}

\begin{table}[H]
\addtocounter{table}{0}
 \vspace{-5pt}
 \renewcommand{\tablename}{Table}
 \caption{Enumeration of $m$-regular linear stack}
 \label{table:linear}
    \centering
    \begin{tabular}{lcccccccccc}
    \hline
        $n$ & 1 & 2 & 3 & 4 & 5 & 6 & 7 & 8 & 9 & 10 \\\hline
$s_{1, 2}(n)$ & 1 & 2 & 8 & 34 & 147 & 663 & 3096 & 14814 & 72227 & 357591 \\
        $s_{2, 2}(n)$ & 1 & 1 & 2 & 6 & 20 & 66 & 221 & 757 & 2647 & 9402 \\
        $s_{3, 2}(n)$ & 1 & 1 & 1 & 2 & 6 & 18 & 54 & 162 & 491 & 1509 \\
        $s_{4, 2}(n)$ & 1 & 1 & 1 & 1 & 2 & 6 & 18 & 52 & 150 & 434 \\
        $s_{5, 2}(n)$ & 1 & 1 & 1 & 1 & 1 & 2 & 6 & 18 & 52 & 148 \\
        $s_{6, 2}(n)$ & 1 & 1 & 1 & 1 & 1 & 1 & 2 & 6 & 18 & 52 \\\hline
    \end{tabular}
\end{table}
The enumeration results shown in Table \ref{table:linear} are consistent with those of $m$-regular linear stacks counted by Chen et al. in 2014 {\cite{Chen2014}}, who use the $reduction$ method and the $zigzag$ structure.

Finally, We are able to obtain an equation that the generating function of $m$-regular $3$-contact stack satisfying.

\begin{cor}
\label{cor:spatial}
The generating function for the $m$-regular $3$-contact stack satisfies the equation
\begin{eqnarray*}
\sum\limits_{k=0}
^{17}c_{m, 3, k}(x)S_{m, 3}^k(x)=0.
\end{eqnarray*}
where the coefficients $c_{m, 3, k}(x)$ are detailed at 
\href{https://github.com/SiennaXSN/General-Stack}{https://github.com/SiennaXSN/General-Stack}.

\end{cor}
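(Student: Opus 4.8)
The plan is to carry out the $d=3$ instance of the elimination already used for $d=1,2$. First I would instantiate at $d=3$ the system of Theorem \ref{thm:syseq}, i.e. the first-return equation \eqref{equ:gst} with $A^{\langle s,t\rangle}_{m,d}$ expanded via \eqref{equ:Aexpand} (which also supplies the diagonal entries $s=t$), together with \eqref{equ:g00} for $G^{\langle 0,0\rangle}_{m,3}$. Reading off the index ranges in \eqref{equ:syseq}, the only series that occur are the $G^{\langle s,t\rangle}_{m,3}$ with $0\le t\le s\le 2$, plus $G^{\langle 3,0\rangle}_{m,3}$, which shows up on the right of \eqref{equ:g00}; any entry with $t>s$ appearing in an inner sum $\sum_{c=0}^{3-a}G^{\langle c,t\rangle}_{m,3}$ is rewritten through the mirror symmetry $G^{\langle s,t\rangle}_{m,3}=G^{\langle t,s\rangle}_{m,3}$. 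This gives a closed system of seven polynomial equations in the seven unknowns
$$G^{\langle 0,0\rangle}_{m,3},\ G^{\langle 1,0\rangle}_{m,3},\ G^{\langle 1,1\rangle}_{m,3},\ G^{\langle 2,0\rangle}_{m,3},\ G^{\langle 2,1\rangle}_{m,3},\ G^{\langle 2,2\rangle}_{m,3},\ G^{\langle 3,0\rangle}_{m,3},$$
with coefficients polynomial in $x$ and in $C=C_{m,3}(x)$, the latter carried as a symbol subject to $(x^3-1)C=x^{3(m-1)}-1$, exactly as $C_{m,1}$ and $C_{m,2}$ were treated in the two cases already worked out.

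Next I would eliminate the six auxiliary series. Using an elimination term order (or iterated resultants) that ranks $G^{\langle 1,0\rangle}_{m,3},\dots,G^{\langle 3,0\rangle}_{m,3}$ above $G^{\langle 0,0\rangle}_{m,3}$, compute the elimination ideal in $G^{\langle 0,0\rangle}_{m,3}$ and $x$ alone; this is the Maple computation referenced in the statement. Since $G^{\langle 0,0\rangle}_{m,3}(x)$ is a power series in $x^3$, after re-substituting $C=(x^{3(m-1)}-1)/(x^3-1)$ and clearing denominators the relation produced has coefficients that are polynomials in $x^3$, so the elimination returns $\sum_{k=0}^{17}c_{m,3,k}(x^3)\,[G^{\langle 0,0\rangle}_{m,3}(x)]^{k}=0$ of degree $17$, with the $c_{m,3,k}$ the polynomials posted at the cited repository. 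Finally \eqref{equ:samenum} says $G^{\langle 0,0\rangle}_{m,3}(x)=S_{m,3}(x^3)$, so replacing $x^3$ by $x$ throughout yields $\sum_{k=0}^{17}c_{m,3,k}(x)\,S_{m,3}^{k}(x)=0$, which is the assertion. As a sanity check one compares the first several Taylor coefficients of the algebraic series thereby determined with the values $s_{m,3}(n)$ for $m=1,2,3$, just as was done for $d=1,2$ against Tables \ref{table:simple} and \ref{table:linear}.

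The only real obstacle is the size of the computation: this is a six-variable Gröbner/resultant elimination with the parameter $m$ entering through the exponent in $C$, so that powers such as $x^{3m}$ enter as indeterminates; one must make sure the chosen order terminates in practice, that no factor foreign to $G^{\langle 0,0\rangle}_{m,3}$ is introduced and no genuine one is lost when clearing denominators, and in particular that the degree in $G^{\langle 0,0\rangle}_{m,3}$ is exactly $17$ rather than larger --- all of which is confirmed by the explicit coefficient data. Conceptually nothing beyond Theorem \ref{thm:syseq} and Lemma \ref{lem: A} is needed; the corollary is simply their $d=3$ specialization followed by this elimination.
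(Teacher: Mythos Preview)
Your proposal is correct and follows essentially the same approach as the paper: you specialize Theorem~\ref{thm:syseq} to $d=3$, obtain the same closed system of seven polynomial equations in the seven unknowns $G^{\langle 0,0\rangle}_{m,3},\dots,G^{\langle 3,0\rangle}_{m,3}$, eliminate the auxiliaries by computer algebra to reach a degree-$17$ relation for $G^{\langle 0,0\rangle}_{m,3}$, and then invoke \eqref{equ:samenum}. The paper's proof is terser (it writes out the seven equations explicitly and then simply says ``Solving by Maple''), while you supply more detail on the elimination mechanics and add a Taylor-coefficient sanity check, but there is no substantive difference.
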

\begin{proof}
Similarity to the enumerating method of simple stack, we can obtain the following set of equations, 
\begin{eqnarray*}
\left\{\begin{aligned}
  G^{\langle 0, 0\rangle }_{m, 3}&=1+x^3\sum_{i=0}^3 G^{\langle i, 0\rangle }_{m, 3}\\
  G^{\langle 1, 0\rangle }_{m, 3}&=x^3\bigg[\Big(G^{\langle 0, 0\rangle }_{m, 3}-C_{m, 3}\Big)\sum_{i=0}^2 G^{\langle i, 0\rangle }_{m, 3}+G^{\langle 1, 0\rangle }_{m, 3}\sum_{i=0}^1 G^{\langle i, 0\rangle }_{m, 3}+G^{\langle 2, 0\rangle }_{m, 3}G^{\langle 0, 0\rangle }_{m, 3}\bigg]\\
  G^{\langle 1, 1\rangle }_{m, 3}&=G^{\langle 0, 0\rangle }_{m, 3}-C_{m, 3}+x^3\bigg[\Big(G^{\langle 0, 0\rangle }_{m, 3}-C_{m, 3}\Big)\sum_{i=0}^2 G^{\langle i, 1\rangle }_{m, 3}+G^{\langle 1, 0\rangle }_{m, 3}\sum_{i=0}^1 G^{\langle i, 1\rangle }_{m, 3}\\&+G^{\langle 2, 0\rangle }_{m, 3}G^{\langle 0, 1\rangle }_{m, 3}\bigg]\\
  G^{\langle 2, 0\rangle }_{m, 3}&=x^3\bigg[G^{\langle 1, 0\rangle }_{m, 3}\sum_{i=0}^2 G^{\langle i, 0\rangle }_{m, 3}+\Big(G^{\langle 1, 1\rangle }_{m, 3}-G^{\langle 0, 0\rangle }_{m, 3}+C_{m, 3}\Big)\sum_{i=0}^1 G^{\langle i, 0\rangle }_{m, 3}\\&+\Big(G^{\langle 2, 1\rangle }_{m, 3}-G^{\langle 1, 0\rangle }_{m, 3}\Big)G^{\langle 0, 0\rangle }_{m, 3}\bigg]\\
  G^{\langle 2, 1\rangle }_{m, 3}&=G^{\langle 1, 0\rangle }_{m, 3}+x^3\bigg[G^{\langle 1, 0\rangle }_{m, 3}\sum_{i=0}^2 G^{\langle i, 1\rangle }_{m, 3}\\&+\Big(G^{\langle 1, 1\rangle }_{m, 3}-G^{\langle 0, 0\rangle }_{m, 3}+C_{m, 3}\Big)\sum_{i=0}^1 G^{\langle i, 1\rangle }_{m, 3}+\Big(G^{\langle 2, 1\rangle }_{m, 3}-G^{\langle 1, 0\rangle }_{m, 3}\Big)G^{\langle 0, 1\rangle }_{m, 3}\bigg]\\
  G^{\langle 2, 2\rangle }_{m, 3}&=G^{\langle 1, 1\rangle }_{m, 3}-G^{\langle 0, 0\rangle }_{m, 3}+C_{m, 3}+x^3\bigg[G^{\langle 1, 0\rangle }_{m, 3}\sum_{i=2}^2 G^{\langle i, 2\rangle }_{m, 3}\\&+\Big(G^{\langle 1, 1\rangle }_{m, 3}-G^{\langle 0, 0\rangle }_{m, 3}+C_{m, 3}\Big)\sum_{i=0}^1 G^{\langle i, 2\rangle }_{m, 3}+\Big(G^{\langle 2, 1\rangle }_{m, 3}-G^{\langle 1, 0\rangle }_{m, 3}\Big)G^{\langle 0, 2\rangle }_{m, 3}\bigg]\\
  G^{\langle 3, 0\rangle }_{m, 3}&=x^3\bigg[G^{\langle 2, 0\rangle }_{m, 3}\sum_{i=0}^2 G^{\langle i, 0\rangle }_{m, 3}+\Big(G^{\langle 2, 1\rangle }_{m, 3}-G^{\langle 1, 0\rangle }_{m, 3}\Big)\sum_{i=0}^1 G^{\langle i, 0\rangle }_{m, 3}\\&+\Big(G^{\langle 2, 2\rangle }_{m, 3}-G^{\langle 1, 1\rangle }_{m, 3}+G^{\langle 0, 0\rangle }_{m, 3}-C_{m, 3}\Big)G^{\langle 0, 0\rangle }_{m, 3}\bigg]
  \end{aligned}\right.
\end{eqnarray*}
where $C_{m, 3}(x)=\frac{x^{3(m-1)}-1}{x^3-1}$.

Solving by Maple gives us that
\begin{eqnarray*}
\sum\limits_{k=0}^{17}c_{m, 3, k}(x^3)\bigg[G^{\langle 0, 0\rangle }_{m, 3}(x)\bigg]^k=0.
\end{eqnarray*}$\qedhere\hfill\blacksquare$
\end{proof}

when $m=1$, we have
\begin{eqnarray*}
\sum\limits_{k=0}
^{17}c_{1, 3, k}(x)S_{1, 3}^k(x)=0.
\end{eqnarray*}
where
\begin{eqnarray*}
\begin{aligned}
c_{1, 3, 0} &= 1 \\
c_{1, 3, 1}&= x \\
c_{1, 3, 2}&= 52 x^{2}+x^{1}-1 \\
c_{1, 3, 3}&= 60 x^{3}-5 x^{2} \\
c_{1, 3, 4}&= 787 x^{4}-115 x^{3}-45 x^{2} \\
c_{1, 3, 5}&= 871 x^{5}-376 x^{4}-3 x^{3} \\
c_{1, 3, 6}&= 5731 x^{6}-511 x^{5}-1314 x^{4}+162 x^{3} \\
c_{1, 3, 7}&= 8188 x^{7}-7823 x^{6}+315 x^{5} \\
c_{1, 3, 8}&= 21690 x^{8}-4575 x^{7}-5483 x^{6}-702 x^{5}+729 x^{4} \\
c_{1, 3, 9}&= 35452 x^{9}-37272 x^{8}-10146 x^{7}+7938 x^{6} \\
c_{1, 3, 10}&= 53179 x^{10}-50388 x^{9}+11385 x^{8}+3888 x^{7} \\
c_{1, 3, 11}&= 63508 x^{11}-84400 x^{10}+11576 x^{9}+6912 x^{8} \\
c_{1, 3, 12}&= 65208 x^{12}-79736 x^{11}+39360 x^{10} \\
c_{1, 3, 13}&= 53304 x^{13}-69312 x^{12}+18432 x^{11} \\
c_{1, 3, 14}&= 34304 x^{14}-38912 x^{13}+16384 x^{12} \\
c_{1, 3, 15}&= 10240 x^{15}-16384 x^{14} \\
c_{1, 3, 16}&= c_{1, 3, 17}=0
\end{aligned}
\end{eqnarray*}

According to Corollary \ref{cor:spatial}, we list following Table \ref{table:spatial} of some enumerating number of $m$-regular $3$-contact stack.
\begin{table}[H]
\addtocounter{table}{0}
 \vspace{-5pt}
 \renewcommand{\tablename}{Table}
 \caption{Enumeration of $m$-regular $3$-contact stack}
 \label{table:spatial}
    \centering
    \begin{tabular}{lcccccccccc}
    \hline
    $n$ & 1 & 2 & 3 & 4 & 5 & 6 & 7 & 8 & 9 & 10 \\ \hline
$s_{1, 3}(n)$ & 1 & 2 & 8 & 48 & 312 & 2062 & 13890 & 95558 & 669842 & 4768645 \\
$s_{2, 3}(n)$ & 1 & 1 & 2 & 6 & 22 & 88 & 364 & 1534 & 6561 & 28445 \\
$s_{3, 3}(n)$ & 1 & 1 & 1 & 2 & 6 & 20 & 68 & 236 & 832 & 2970 \\
$s_{4, 3}(n)$ & 1 & 1 & 1 & 1 & 2 & 6 & 20 & 66 & 216 & 710 \\
$s_{5, 3}(n)$ & 1 & 1 & 1 & 1 & 1 & 2 & 6 & 20 & 66 & 214 \\
$s_{6, 3}(n)$ & 1 & 1 & 1 & 1 & 1 & 1 & 2 & 6 & 20 & 66 \\\hline
    \end{tabular}
\end{table}
 
At last, we illustrate the growing curves of $s_{m, d}(n)$ for $m=2,5$ and $d=1,2,3$ in Figure \ref{fig:trend}.

\begin{figure}[H]
\begin{minipage}[t]{1\textwidth}
\centering
\includegraphics[scale=0.33]{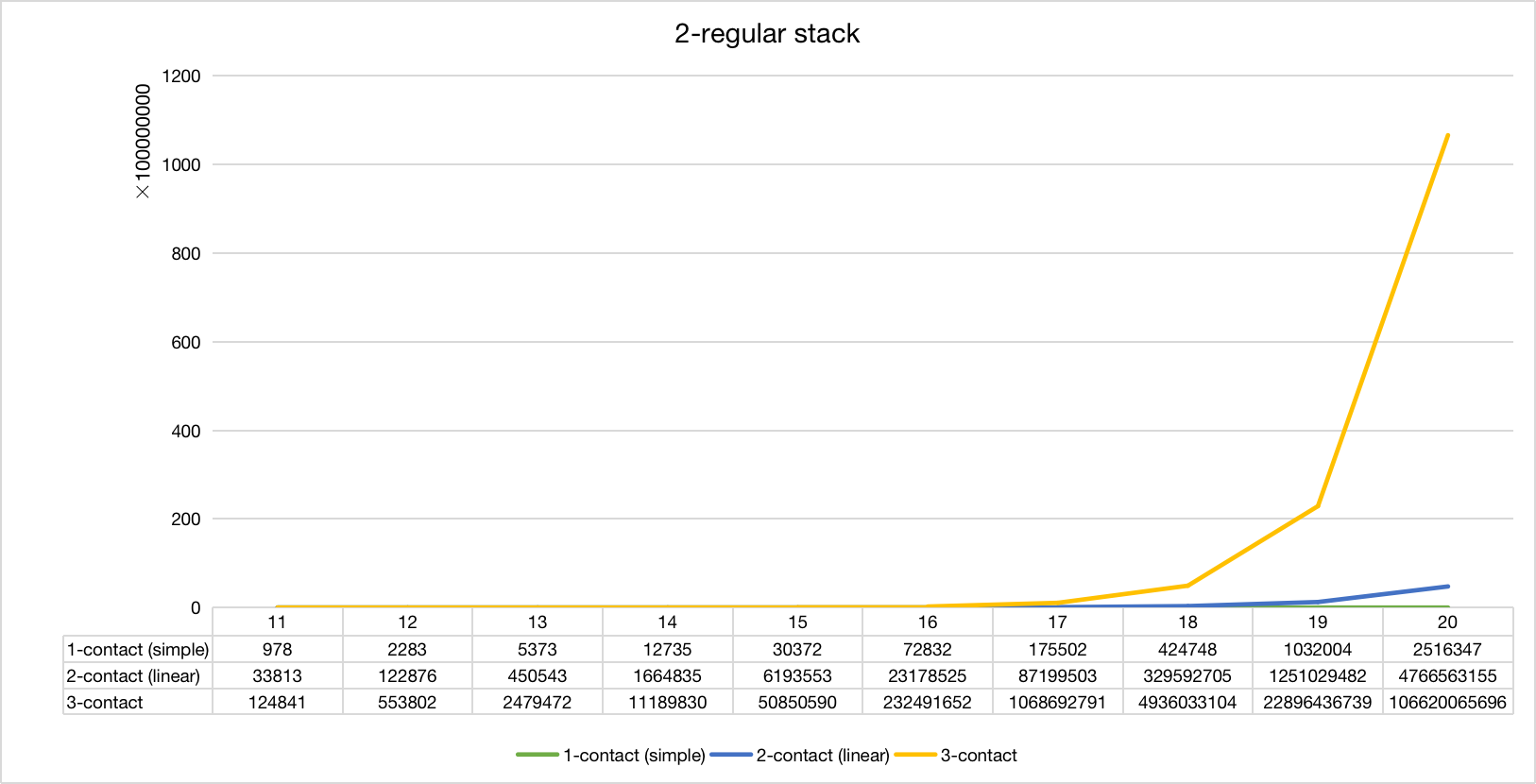}
\centerline{(a) $2$-regular stack}
\label{fig:side:a}
\end{minipage}%

\begin{minipage}[t]{1\textwidth}
\centering
\includegraphics[scale=0.33]{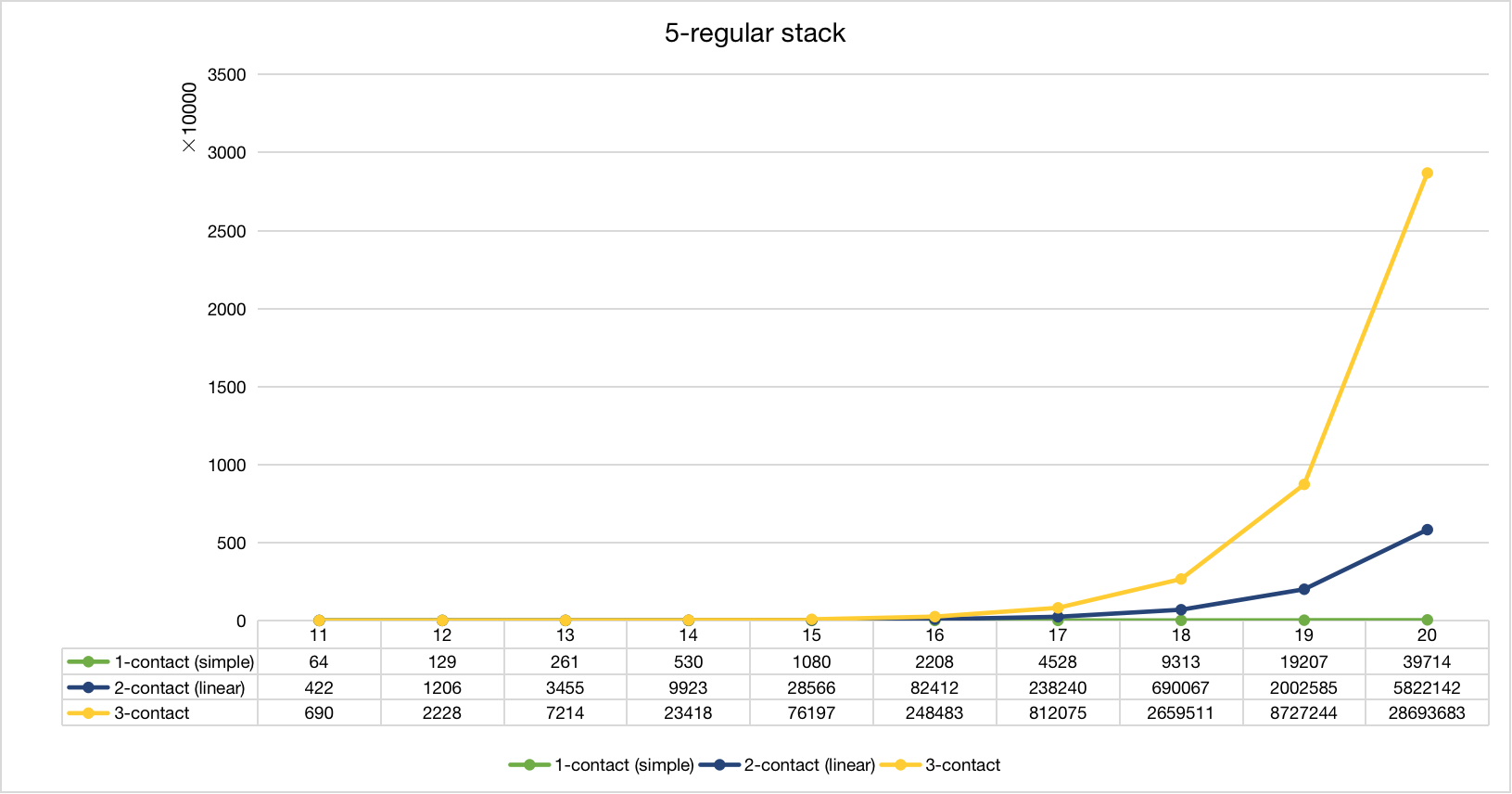}
\centerline{(b) $5$-regular stack}
\end{minipage}
\caption{Growing curves of $s_{m,d}(n)$}
\label{fig:trend}
\end{figure}

\section*{Acknowledgments}
This work was supported by the National Natural Science Foundation of China (Grant No. 12071235 and 11501307), and the Fundamental Research Funds for the Central Universities.

\end{document}